\newtheorem{thm}{Theorem}[section]
\newtheorem{lem}[thm]{Lemma}
\newtheorem{prop}[thm]{Proposition}
\newtheorem{cor}[thm]{Corollary}
\theoremstyle{definition}
\newtheorem{rem}[thm]{Remark}
\newtheorem{ex}[thm]{Example}
\newcommand{\Z}{\mathbb{Z}}
\newcommand{\F}{\mathbb{F}}
\title{Exact values and improved bounds on the clique number of cyclotomic graphs}
\author{Chi Hoi Yip}
\address{School of Mathematics\\ Georgia Institute of Technology\\ Atlanta, GA 30332\\ United States}
\email{cyip30@gatech.edu}
\keywords{cyclotomic graph, Paley graph, clique number, direction}
\subjclass[2020]{11T06, 11B30, 05C25, 51E15}
\begin{document}

\begin{abstract}
Let $q$ be an odd power of a prime $p$, and $S \subseteq \F_q^*$ such that $S=-S$ and $S/S \neq \F_q^*$. We show that the clique number of the Cayley graph $\operatorname{Cay}(\F_q^+,S)$ is at most $\sqrt{|S/S|}+\sqrt{q/p}$, improving the best-known $\sqrt{q}$ upper bound for many families of such graphs substantially. Such a new bound is strongest for cyclotomic graphs and in particular, it implies the first nontrivial upper bound on the clique number of all generalized Paley graphs of non-square order, extending the work of Hanson and Petridis. Moreover, our new bound is asymptotically sharp for an infinite family of generalized Paley graphs, and we further discover the first nontrivial family among them for which the clique number can be exactly determined. We also obtain a new lower bound on the number of directions determined by a large Cartesian product in the affine Galois plane $AG(2,q)$, which is sharp for infinite families.
\end{abstract}

\maketitle

\section{Introduction}

Throughout the paper, we let $p$ be a prime, $q$ a power of $p$, $\F_q$ the finite field with $q$ elements. We let $\F_q^+$ be the additive group of $\F_q$, and $\F_q^*=\F_q \setminus \{0\}$. 

A {\em clique} in a graph $X$ is a subgraph of $X$ that is a complete graph.  A {\em maximum clique} is a clique with the maximum size, while a {\em maximal clique} is a clique where one cannot add another vertex to it and still have a clique. For a graph $X$, the {\em clique number} of $X$, denoted $\omega (X)$, is the size of a maximum clique of $X$. 

This paper aims to present new progress toward the estimation of the clique number of cyclotomic graphs, including generalized Paley graphs. We begin by recalling a few basic terminologies. Let $S$ be a subset of $\F_q^*$ with $S=-S$, the \emph{Cayley graph} $X=\operatorname{Cay}(\F_q^+;S)$ is the graph with vertex set $\F_q$, such that two vertices are adjacent if and only if their difference is in the \emph{connection set} $S$. We are mostly interested in the case that $X$ is a \emph{cyclotomic graph}, namely, $S$ is a union of \emph{cyclotomic classes}, that is, $S$ is a union of cosets of a multiplicative subgroup of $\F_q^*$. Note that the clique number of $X$ is the maximum size of a subset $C$ of $\F_q$ such that $C-C \subseteq S \cup \{0\}$. Thus, estimating the clique number of $X$ is closely related to the additive structure of $S$, and such a problem is central in arithmetic combinatorics especially when $S$ is known to have some multiplicative structure \cite{CL07}. 

Cyclotomic graphs are well-studied, especially because they build a bridge among algebraic graph theory, arithmetic combinatorics, number theory, finite geometry, and other subjects (see for example \cite{AY22, BWX99, DSW, FX12, HP, Y22, Y25+}). Among cyclotomic graphs, generalized Paley graphs are of special interest. Let $d>1$ be a positive integer and $q$ be a prime power such that $q\equiv 1 \pmod {d}$; if $q$ is odd, assume further that $q \equiv 1 \pmod {2d}$ \footnote{This assumption guarantees that $-1$ is a $d$-th power in $\F_q$ so that $GP(q,d)$ is undirected.}. The {\em $d$-Paley graph} on $\F_q$, denoted $GP(q,d)$, is the Cayley graph $\operatorname{Cay}(\F_q^+;(\F_q^*)^d)$, where $(\F_q^*)^d=\{x^d: x \in \F_q^*\}$ is the set of $d$-th powers in $\F_q^*$. It is well-known that $\omega\big(GP(q,d)\big) \leq \sqrt{q}$; see for example \cite[Theorem 1.1]{Yip4}. This square root upper bound on the clique number is often referred to as the {\em trivial upper bound} in the literature. In fact, one of the folklore proofs \cite[Lemma 1.2]{Yip1} of such a square root upper bound extends to a larger family of Cayley graphs, which we describe below. 
\begin{lem}[Trivial upper bound]\label{lem:tub}
Let $q$ be a prime power. Let $S \subseteq \F_q^*$ with $S=-S$ such that $S/S \neq \F_q^*$. Then $\omega(\operatorname{Cay}(\F_q^+;S))\leq \sqrt{q}$.
\end{lem} 
\begin{proof}
Let $C=\{v_1, v_2, \ldots, v_N\} \subseteq \F_q$ be a maximum clique in $\operatorname{Cay}(\F_q^+;S)$. Let $g \in \F_q^* \setminus (S/S)$, and consider the set $W=\{v_i+gv_j: 1 \leq i,j \leq N\}$. It suffices to show that $|W|=N^2$ since $W \subseteq \F_q$. Assume that there exist $1 \leq i,i',j,j' \leq N$ such that $v_i+gv_j=v_i'+gv_j'$, which implies that $v_i-v_i'=g(v_j'-v_j)$. However, note that $v_i-v_i', v_j'-v_j \in S \cup \{0\}$ and $g \notin S/S$, forcing that $v_i- v_i=v_j'-v_j=0$, that is, $i=i'$ and $j=j'$.
\end{proof}

This trivial upper bound can sometimes be achieved, for example, if $q$ is a square and $\F_{\sqrt{q}}^* \subseteq S$, then $\F_{\sqrt{q}}$ forms a clique in the corresponding Cayley graph. In particular, if $q$ is a square and $d \geq 2$ is a divisor of $(\sqrt{q}+1)$, then $\omega(GP(q,d))=\sqrt{q}$ \cite{BDR88} (the converse is also true \cite{Yip4, Y25+}).

Even if $q$ is a non-square, this $\sqrt{q}$ trivial upper bound is the best-known for many families of Cayley graphs. We provide a significant improvement on this bound.

\begin{thm}\label{thm:Cayley}
Let $q$ be an odd power of a prime $p$. Let $S \subseteq \F_q^*$ with $S=-S$ such that $S/S \neq \F_q^*$. Then $\omega(\operatorname{Cay}(\F_q^+;S))< \sqrt{|S/S|}+1$ if $q=p$ and $\omega(\operatorname{Cay}(\F_q^+;S))< \sqrt{|S/S|}+\sqrt{q/p}-1$ if $q \neq p$.
\end{thm}

Note that our new upper bound highly depends on the multiplicative structure of the connection set $S$. In the case that $S$ is a union of cyclotomic classes, it is easy to compute the size of the ratio set $S/S$. In particular, Theorem~\ref{thm:Cayley} quickly implies the following important corollary on the clique number of cyclotomic graphs. While a cyclotomic graph is simply the edge-disjoint union of copies of a given generalized Paley graph, in general it is more complicated to estimate its clique number.

\begin{cor}\label{cor:main}
Let $d \geq 2$, and let $q \equiv 1 \pmod {d}$ be an odd power of a prime $p$. Let $H=(\F_q^*)^d$, $g$ be a primitive root of $\F_q$, and $I \subseteq \Z/d\Z$ be an index set. Consider the cyclotomic graph $X=\operatorname{Cay}(\F_q^+, S_{d,I})$, where $S_{d, I}=\bigcup_{i \in I} g^i H$ satisfies $S_{d,I}=-S_{d,I}$. If $I-I \neq \Z/d\Z$, then 
$$
\omega(X) \leq \sqrt{\frac{|I-I|q}{d}}+\sqrt{\frac{q}{p}}.
$$    
In particular, assume additionally that $q\equiv 1 \pmod {2d}$ if $q$ is odd, $\omega(GP(q,d))\leq  \sqrt{q/d}+\sqrt{q/p}$. 
\end{cor}
\begin{proof}
Since $I-I \neq \Z/d\Z$, it follows that 
$$S_{d,I}/S_{d,I}=\bigcup_{i,j \in I} g^{i-j} H=\bigcup_{k \in I-I} g^k H \subsetneq \F_q^*.$$
The corollary thus follows from Theorem~\ref{thm:Cayley} immediately.
\end{proof}

Theorem~\ref{thm:Cayley} is inspired by the recent breakthrough of Hanson and Petridis \cite{HP} on the clique number of generalized Paley graphs. They used Stepanov's method \cite{S69} to show that $\omega\big(GP(p,d)\big)\leq \sqrt{p/d}+1$ when $q=p$ is a prime. Their method has been extended by the author to an arbitrary prime power \cite{Yip2, Yip1} in the following result:

\begin{thm}[{\cite{HP},\cite[Theorem 5.8]{Yip2}}] \label{thm:Nn}
Let $q$ be a power of an odd prime $p$. If $q\equiv 1 \pmod{2d}$, and $2 \leq n\leq N=\omega\big(GP(q,d)\big)$ satisfies
$
\binom{n-1+\frac{q-1}{d}}{\frac{q-1}{d}}\not \equiv 0 \pmod p,
$
then $(N-1)n \leq \frac{q-1}{d}$.
\end{thm}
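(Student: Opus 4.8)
The plan is to run Stepanov's polynomial method on a maximum clique. Write $m := \frac{q-1}{d}$, and let $\mu = \{x \in \F_q^* : x^m = 1\}$ be the group of nonzero $d$-th powers, so that two vertices of $GP(q,d)$ are adjacent exactly when their difference lies in $\mu$. Fix a maximum clique $A$ with $|A| = N$. Since translating a clique by a fixed element preserves the clique structure, I may normalize so that $0 \in A$. The clique condition then reads: for all distinct $a, b \in A$ one has $(a-b)^m = 1$; in particular $A' := A \setminus \{0\} \subseteq \mu$, so the monic polynomial $g(x) := \prod_{a \in A'}(x-a)$ of degree $N-1$ divides $x^m - 1$.

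The goal is to produce a \emph{nonzero} polynomial $F \in \F_q[x]$ with $\deg F \le m$ that vanishes to order at least $n$ at every point of $A'$. Granting such an $F$, the conclusion is immediate: a nonzero polynomial having a zero of order $n$ at each of the $N-1$ distinct points of $A'$ is divisible by $g(x)^n$, hence has degree at least $(N-1)n$, and therefore $(N-1)n \le \deg F \le m$. The parameter $n$ enters through the order of vanishing: fixing an $n$-element subset $B \subseteq A$ and using the relations $(a-b)^m = 1$ for $a \in A$, $b \in B$, $a \neq b$, each element of $B$ contributes one unit to the order of vanishing at the remaining clique points, so that combining $n$ of them should force vanishing to order $n$.

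The heart of the argument --- and the step I expect to be the main obstacle --- is the \emph{construction} of $F$ with the degree kept down to $m$. The naive candidate $\prod_{b \in B}\big((x-b)^m - 1\big)$ does vanish to roughly the right order on the clique (each factor contributes a simple zero at every clique point distinct from $b$), but its degree is $mn$, far too large to be useful. Stepanov's method remedies this by exploiting the relations $(a-b)^m = 1$ --- equivalently, that $(x-b)^m - 1$ vanishes on the whole clique except at $b$ --- to rewrite this product, systematically reducing its degree all the way down to $m$ while retaining the order-$n$ vanishing at $A'$. Carrying out this degree reduction cleanly, and verifying that no multiplicity is lost in the process, is the technical crux.

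Finally, one must rule out the possibility $F \equiv 0$, since a polynomial of degree $\le m$ carrying so many high-order zeros is in danger of being trivial --- indeed it \emph{must} be trivial precisely when $(N-1)n > m$. To decide this, I would compute the coefficient of $x^m$ in $F$; the reductions above turn this into a purely combinatorial quantity, and it works out to $\pm\binom{n-1+m}{m} \pmod p$ (this binomial coefficient is exactly the coefficient of $x^m$ in $(1-x)^{-n}$, which is where it enters). The hypothesis $\binom{n-1+m}{m} \not\equiv 0 \pmod p$ therefore guarantees $F \neq 0$, and the degree count above then forces $(N-1)n \le m$. Should this coefficient vanish modulo $p$, the method genuinely breaks down, which is why the statement must carry the binomial hypothesis rather than holding unconditionally.
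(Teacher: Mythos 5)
Your high-level architecture agrees with the Hanson--Petridis/Yip argument (which this paper only cites, from \cite{HP} and \cite[Theorem 5.8]{Yip2}): a Stepanov auxiliary polynomial of degree at most $m=\frac{q-1}{d}$ vanishing to high order on the clique, with $\binom{n-1+m}{m}$ appearing as the would-be leading coefficient whose nonvanishing mod $p$ certifies $F\not\equiv 0$. But there is a genuine gap exactly where you flag it: the construction of $F$ is never carried out, and the route you sketch does not work. ``Rewriting $\prod_{b\in B}\bigl((x-b)^m-1\bigr)$ using the relations $(a-b)^m=1$'' is not a defined polynomial operation --- those relations hold at points of the clique, not as identities in $\F_q[x]$, so there is no canonical degree reduction that preserves vanishing orders. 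Moreover, even granting such a reduction, your multiplicity bookkeeping is internally inconsistent: the factor $(x-b)^m-1$ does not vanish at $b$ itself (it equals $-1$ there), so the naive product vanishes to order $n$ only at the $N-n$ points of $A'\setminus B$ and to order $n-1$ at the $n-1$ points of $B\cap A'$, a total of $n(N-n)+(n-1)^2=n(N-2)+1$, which falls short of the $(N-1)n$ zeros (with multiplicity) your degree count requires.

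The actual construction is a \emph{linear combination}, not a product: one takes $f(x)=\sum_{j=1}^{n}c_j(x-a_j)^{m+n-1}$ for an $n$-subset $\{a_1,\dots,a_n\}$ of the clique and chooses $(c_j)$ by a Vandermonde system ($\sum_j c_ja_j^i=0$ for $0\le i\le n-2$, normalized by $\sum_j c_ja_j^{n-1}=1$). These conditions do double duty: they kill the coefficients of $x^{m+1},\dots,x^{m+n-1}$, so that $\deg f\le m$ with coefficient of $x^m$ equal to $\pm\binom{m+n-1}{n-1}=\pm\binom{n-1+m}{m}$ (your identification of this coefficient is correct, and this is where the hypothesis enters), and they force the first $n$ Hasse derivatives of $f$, after adjusting by a suitable constant, to vanish at the clique points via $(a-a_j)^{m+n-1-k}=(a-a_j)^{n-1-k}$. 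Hasse derivatives rather than ordinary derivatives are essential here in characteristic $p$, and the vanishing orders at the chosen points $a_j$ and at $0$ need separate bookkeeping to reach the full count $(N-1)n$. None of this is routine --- it is the entire content of the theorem --- and as written your argument only establishes that \emph{if} such an $F$ exists then the bound follows.
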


When $q=p$ is a prime, it is easy to verify that the binomial coefficient condition is automatically satisfied. In general, Theorem~\ref{thm:Nn} provides an algorithm to give an upper bound on $\omega(GP(q,d))$ by analyzing the base-$p$ representation of $\frac{q-1}{d}$. When $d=2$, which corresponds to Paley graphs, the author \cite{Yip1} used this idea to prove that $\omega\big(GP(q,2)\big)\leq \sqrt{q/2}+O(\sqrt{q/p})$ when $q$ is an odd power of a prime $p$. More generally, when $d \mid (p-1)$ and $d \geq 3$, the author \cite[Section 5.2]{Yip2} proved that $\omega\big(GP(q,d)\big)\leq (1+o(1))\sqrt{q/d}$. However, as remarked in \cite[Section 5.2]{Yip2}, in general, the analysis is fairly complicated and it is not clear if this approach would always achieve a \emph{nontrivial} upper bound on the clique number. There are other non-trivial upper bounds on $\omega(GP(q,d))$ in \cite{Yip2}, but they are all of the shape $(1-o(1))\sqrt{q}$. To conclude, in general, the best-known upper bound on $\omega(GP(q,d))$ is $(1-o(1))\sqrt{q}$.

Our new results refine the Hanson-Petridis method and its generalization in the following aspects. Firstly, our results show that the barrier created by the binomial coefficients in Theorem~\ref{thm:Nn} can be essentially removed and thus we extend the Hanson-Petridis bound to all finite fields with non-square order. Note that even if $q$ is assumed to be a non-square, the non-vanishing condition on binomial coefficients cannot be completely removed; a family of counterexamples can be found in Theorem~\ref{thm:cliquenumber}. More importantly, it seems the Hanson-Petridis method and its generalization apply only to generalized Paley graphs, while our approach produces a similar upper bound on the clique number of cyclotomic graphs or general Cayley graphs. Interestingly, our proof is based on a simple variant of Stepanov's method; see Remark~\ref{rem:Stepanov}.

Let $p$ be a prime. Observe that when $q=p^3$ and $d \geq 2$ is a divisor of $p^2+p+1$, the subfield $\F_p$ forms a clique in $GP(q,d)$ and thus $\omega(GP(q,d))\geq p$; this was first observed by Broere, D\"{o}man, and Ridley \cite{BDR88}. It was recently proved that $\F_p$ forms a maximal clique in $GP(q,d)$ \cite{Y23} for sufficiently large $p$, and it is tempting to conjecture that $\omega(GP(q,d))=p$ since $\F_p$ is the only ``obvious" large clique. Indeed, when $p=o(d)$, Corollary~\ref{cor:main} implies that $\omega(GP(q,d))=(1+o(1))p$, as $p,d \to \infty$. This shows that Corollary~\ref{cor:main} is asymptotically sharp for an infinite family of graphs; this is rather surprising since the 
Hanson-Petridis bound, despite being the best-known and a highly nontrivial upper bound for $\omega(GP(p,d))$, is far from the conjectural bound $p^{o(1)}$ predicted from the Paley graph conjecture (see for example \cite[Section 2.2]{Yip4}). Inspired by this observation, we further discover that the clique number in this setting is in fact exactly $p$. 

\begin{thm}\label{thm:cliquenumber}
Let $p$ be an odd prime.
Let $q=p^3$ and $d$ be a divisor of $p^2+p+1$. If $d>p$, then $\omega(GP(q,d))=p$.  
\end{thm}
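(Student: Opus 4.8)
The plan is to prove the two inequalities separately, the lower bound $\omega(GP(q,d)) \ge p$ being the easy half. Let $H$ denote the set of nonzero $d$-th powers in $\F_q$, the unique subgroup of $\F_q^*$ of index $d$, so $|H| = (q-1)/d$. Since $d \mid p^2+p+1 = (q-1)/(p-1)$, the subgroup $\F_p^*$ of order $p-1$ is contained in $H$; hence every nonzero difference of two elements of the subfield $\F_p$ is a $d$-th power, so $\F_p$ is a clique of size $p$ and $\omega(GP(q,d)) \ge p$.

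For the upper bound I would view $\F_q = \F_{p^3}$ as the affine space $\AG(3,p)$ and work with the directions its cliques determine. Let $S$ be any clique and let $\pi \colon \F_q \setminus \{0\} \to \PG(2,p)$ be the quotient by the scalar action of $\F_p^*$, under which lines of $\PG(2,p)$ correspond to $2$-dimensional $\F_p$-subspaces of $\F_q$. Every difference $s - s'$ with $s \ne s'$ in $S$ is a $d$-th power, i.e. lies in $H$, so the set of directions $\{\pi(s-s') : s \ne s' \in S\}$ is contained in $K := \pi(H) = H/\F_p^*$. Because $\F_p^* \subseteq H$, the set $K$ has size $m := |H|/(p-1) = (p^2+p+1)/d$. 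The hypothesis $d > p$ gives $m < (p^2+p+1)/p = p + 1 + 1/p$, so $m \le p+1$; and since $p^2+p+1 \equiv 1 \pmod{p}$ and $p^2+p+1 \equiv 1 \pmod{p+1}$, the values $m = p$ and $m = p+1$ are excluded, leaving $|K| = m \le p-1$.

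The crucial step is the geometric observation that such a small $K$ cannot meet every line of $\PG(2,p)$: a blocking set in $\PG(2,p)$ has at least $p+1$ points, so any point set of size at most $p$ misses some line. Applying this to $K$ (of size $m \le p-1$) produces a line $\ell$, i.e. a $2$-dimensional subspace $U \subseteq \F_q$, with $\ell \cap K = \emptyset$; equivalently, no nonzero element of $U$ lies in $H$. Then for $s \ne s'$ in $S$ the difference $s - s' \in H$ cannot lie in $U$, so $(S - S) \cap U = \{0\}$ and the quotient map $\F_q \to \F_q / U \cong \F_p$ is injective on $S$. Hence $|S| \le p$, and combining with the lower bound yields $\omega(GP(q,d)) = p$.

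I expect the main point to be conceptual rather than computational: recognizing that the clique's direction set lives inside $K$ and that $d > p$ pushes $|K|$ strictly below the minimum blocking-set size in $\PG(2,p)$, so that a line avoiding $K$ exists and the injective projection to $\F_p$ closes the argument at once. The only delicate bookkeeping is verifying $m \le p - 1$ from $d > p$ and $d \mid p^2+p+1$; this is exactly where the precise divisibility of $p^2+p+1$ is used, and it is what guarantees $K$ falls below the blocking-set threshold.
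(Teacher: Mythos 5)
Your proof is correct, and it takes a genuinely different route from the paper. The paper deduces the theorem as a special case of Proposition~\ref{prop:cliquenumber2}, whose upper bound rests on Theorem~\ref{thm:Nn} (the Stepanov-method inequality $(N-1)n\leq\frac{q-1}{d}$) together with a Kummer-theorem check that no base-$p$ carrying occurs, applied with $n=|K|+1=p+1$. You instead argue geometrically: since $\F_p^*\subseteq H=(\F_q^*)^d$, the set $H$ is a union of punctured $\F_p$-lines, so its image $K$ in $\PG(2,p)$ has exactly $(p^2+p+1)/d$ points; the divisibility constraints correctly force this to be at most $p-1$, which is below the minimal blocking-set size $p+1$, so $K$ misses a line, i.e.\ some $2$-dimensional $\F_p$-subspace $U$ meets $S-S$ only in $0$, and projecting to $\F_q/U\cong\F_p$ gives $|S|\leq p$. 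All steps check out (in particular the exclusion of $m=p$ and $m=p+1$ via $p^2+p+1\equiv 1\pmod{p}$ and $\pmod{p+1}$, and the elementary counting bound that at most $p$ points cannot block all $p^2+p+1$ lines). Your argument is more elementary and self-contained --- it avoids Stepanov's method entirely --- and it would extend to $q=p^n$ whenever $\frac{q-1}{d(p-1)}\leq p$, by the same hyperplane-avoidance argument in $\PG(n-1,p)$. The paper's route, by contrast, yields the more flexible Proposition~\ref{prop:cliquenumber2}, which covers subfields $K$ other than $\F_p$ and regimes where the direction-class set is too large to miss a line, at the cost of the carrying condition on $\frac{q-1}{d}$.
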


We remark that Theorem~\ref{thm:cliquenumber} is the first nontrivial instance for which the clique number of an infinite family of generalized Paley graphs can be determined, and it refines several results in \cite{BDR88, Y22, Y23}. More generally, if a generalized Paley graph $GP(q,d)$ admits a clique which is a subfield of $\F_q$ and $K$ is the largest such subfield, the author \cite{Y23} used character sum estimates to show that $K$ is a maximal clique. It is also tempting to conjecture that $\omega(GP(q,d))=|K|$, and we confirm that under some extra assumptions in Proposition~\ref{prop:cliquenumber2}. We also construct several interesting infinite families of such graphs in Section~\ref{sec:sec4}.

Theorem~\ref{thm:Cayley} is a consequence of Theorem~\ref{thm:mainD}, concerning a new lower bound on the number of directions determined by a Cartesian product. Before stating the new bound, we recall a few basic definitions. Let $AG(2,q)$ denote the {\em affine Galois plane} over the finite field $\F_q$. Let $U$ be a subset of points in $AG(2,q)$. We use Cartesian coordinates in $AG(2,q)$ so that $U=\{(x_i,y_i):1 \leq i \leq |U|\}$.
The set of {\em directions determined by} $U \subseteq AG(2, q)$ is 
\[ \mathcal{D}_U=\left\{ \frac{y_j-y_i}{x_j-x_i} \colon 1\leq i <j \leq |U| \right \} \subseteq \F_q \cup \{\infty\},\]
 where $\infty$ corresponds to the vertical direction. While the connection between the theory of directions and the clique number of generalized Paley graphs was only discovered in recent papers \cite{HP, Yip2}, the proof of Lemma~\ref{lem:tub} already indicates such a connection. 

R\'edei \cite{LR73} and Sz\H{o}nyi \cite{S99} proved upper bounds on the size of $\mathcal{D}_U$ for a general point set $U$. When the point set $U$ is a Cartesian product, Di Benedetto, Solymosi, and White \cite{DSW} proved the following improved upper bound.

\begin{thm}[\cite{DSW}]\label{DSW}
Let $p$ be a prime. Let $A, B \subseteq \F_p$ be sets each of size at least two such that $|A||B| < p$. Then the set of points $A\times B\subseteq AG(2,p)$ determines at least $|A||B| - \min\{|A|,|B|\} + 2$ directions.
\end{thm}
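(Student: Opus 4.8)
The plan is to translate the direction-counting problem into a statement about a quotient set and then establish the resulting sharp lower bound by a Rédei-type polynomial argument. First I would record the elementary dictionary between directions and collisions. For a Cartesian product, a finite nonzero slope $m$ is a determined direction precisely when there are distinct points $(a,b),(a',b')\in A\times B$ with $b-b'=m(a-a')$, i.e.\ precisely when $m\in (B-B)^{\ast}/(A-A)^{\ast}$, where $S^{\ast}=S\setminus\{0\}$; moreover the slopes $0$ and $\infty$ are always determined because $|A|,|B|\ge 2$. Hence the number of determined directions is exactly $2+\big|(B-B)^{\ast}/(A-A)^{\ast}\big|$, and, since replacing $(A,B)$ by $(B,A)$ inverts the quotient set without changing its size, the theorem is equivalent to the symmetric estimate
\[
\Big|\tfrac{(B-B)^{\ast}}{(A-A)^{\ast}}\Big|\ \ge\ |A||B|-\min\{|A|,|B|\},
\]
which I would prove assuming $|A|\le|B|$, so that the right-hand side is $|A|\,(|B|-1)$.

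The case $|A|=2$ is instructive and would serve as the base case: here $(A-A)^{\ast}=\{\pm\alpha\}$ is a single $\pm$-pair, so the quotient set is the dilate $\alpha^{-1}(B-B)^{\ast}$ and has size $|B-B|-1\ge 2|B|-2=|A|(|B|-1)$ by the Cauchy--Davenport theorem, which applies since $2|B|-1\le|A||B|<p$. This already shows the bound is sharp, with equality for arithmetic progressions, so any proof of the general case must be sensitive to the additive structure of $A$ and $B$ and cannot follow from a crude counting argument.

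For the general lower bound I would use the Rédei polynomial of the point set, which for a Cartesian product factors as
\[
R(X,Y)=\prod_{a\in A} g(X+aY),\qquad g(T)=\prod_{b\in B}(T-b),
\]
a polynomial of degree $|A||B|$ in $X$. A direction $m$ is \emph{not} determined exactly when $R(X,m)$ is squarefree, equivalently when $R(X,m)\mid X^{p}-X$. Writing $X^{p}\equiv\rho(X,Y)\pmod{R(X,Y)}$ with $\deg_{X}\rho<|A||B|$, every non-determined $m$ forces $\rho(X,m)=X$; thus the non-determined slopes are common roots of all the $\F_p[Y]$-coefficients of $\rho(X,Y)-X$, and their number is bounded by the $Y$-degree of any nonzero such coefficient. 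To compute this degree sharply I would exploit the product structure together with the identity
\[
\prod_{a\in A}\big((X^{p}-X)+a(Y^{p}-Y)\big)=R(X,Y)\,\widetilde R(X,Y),\qquad \widetilde R(X,Y)=\prod_{a\in A}\prod_{c\in\F_p\setminus B}(X+aY-c),
\]
which holds because $a^{p}=a$ for $a\in\F_p$; combined with $X^{p}-X\equiv\rho-X\pmod{R}$ it shows $R\mid\prod_{a\in A}\big((\rho-X)+a(Y^{p}-Y)\big)$, encoding how the multiplicities in $R(X,m)$ degenerate as $Y$ moves off $\F_p$ and, I expect, pinning the relevant $Y$-degree down to $(p-1)-|A|(|B|-1)$.

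The main obstacle is precisely this sharp degree computation. A naive bound on $\deg_{Y}\rho$ only reproduces the weaker Rédei--Sz\H{o}nyi estimates (of order $|A||B|/2$ determined directions), which fall far short of the target $|A||B|-\min\{|A|,|B|\}$; squeezing out the exact constant $\min\{|A|,|B|\}$ requires carefully tracking the squarefree-to-repeated transition in the factorization of $R(X,Y)$ as $Y$ specializes, isolating a nonzero coefficient of $\rho(X,Y)-X$ of controlled degree (the argument being vacuous otherwise), and accommodating the extremal arithmetic-progression configurations where equality is attained. Controlling these multiplicities is the technical heart of the proof.
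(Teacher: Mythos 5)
Your setup is on the right track --- the dictionary between determined directions and the quotient set $(B-B)^{\ast}/(A-A)^{\ast}$, the R\'edei polynomial $R(X,Y)=\prod_{a\in A}\prod_{b\in B}(X+aY-b)$, and the observation that a non-determined slope $m$ forces $R(X,m)$ to be squarefree and divide $X^p-X$ are all correct and are exactly the frame in which this theorem (and the paper's generalization, Theorem~\ref{thm:mainD}) is proved. But the proposal stops precisely where the proof has to begin. You reduce everything to exhibiting a \emph{nonzero} coefficient of $\rho(X,Y)-X$ of small $Y$-degree, concede that a naive degree count only recovers the R\'edei--Sz\H{o}nyi bound of order $|A||B|/2$, and then write that you ``expect'' the relevant degree to be $(p-1)-|A|(|B|-1)$. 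That expectation is the theorem; nothing in the proposal produces the required nonzero coefficient, and the identity $\prod_{a\in A}\bigl((X^p-X)+a(Y^p-Y)\bigr)=R\widetilde R$ is not shown to lead anywhere. The $|A|=2$ computation via Cauchy--Davenport is fine but is not a base case of any induction you set up, so it carries no weight for general $|A|$.

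The missing idea is a Stepanov-type multiplicity argument obtained by specializing the extended R\'edei polynomial at $Y=0$ (or any fixed element of $\F_p$). Writing $R(X,Y)F(X,Y)=X^p+h_1(Y)X^{p-1}+\cdots+h_p(Y)$ with $\deg h_i\le i$ (Sz\H{o}nyi's extension), set $G(X)=R(X,0)F(X,0)=F(X,0)\prod_{b\in B}(X-b)^{|A|}=X^p+\sum_{i=0}^{p-1}c_iX^i$. If $c_i\ne 0$ then $h_{p-i}\not\equiv 0$, and since $h_{p-i}$ vanishes at every non-determined direction, $|\mathcal{D}_U|\ge i+1$. Now $G$ has some number $N$ of distinct roots, $n=|B|$ of them (the elements of $B$) with multiplicity at least $m=|A|$, and the multiplicities sum to $p$; since $G$ has at least two distinct roots, Lemma~\ref{lem:derivative0} shows $G'\not\equiv 0$, while each root of multiplicity $k$ is a root of $G'$ of multiplicity at least $k-1$, so $\deg G'\ge p-N$. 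As $G'=\sum ic_iX^{i-1}$, some $c_{i_0}\ne 0$ with $i_0\ge p-N+1$, whence $|\mathcal{D}_U|\ge p-N+2\ge mn-n+2$ because $N\le n+(p-mn)\cdot 1$ is forced by the multiplicity budget --- wait, more precisely $N-n\le p-mn$, i.e.\ $N\le p-mn+n$. Symmetrizing in $A$ and $B$ gives the stated bound. This root-multiplicity bookkeeping is the entire content of the sharp constant $\min\{|A|,|B|\}$, and it is absent from your proposal.
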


They observed that Theorem~\ref{DSW} can be used to recover the Hanson-Petridis bound. We refer to \cite{MWY22, S21} for improvements on Theorem~\ref{DSW} for the Cartesian product $\{1,2,\ldots, n\}^2$ as well as their applications to problems concerning Diophantine
equations. Unfortunately, Theorem~\ref{DSW} fails to extend to general finite fields due to the subfield obstruction. 
The author \cite[Theorem 1.6]{Yip2} extended Theorem~\ref{DSW} to general finite fields under extra assumptions. However, such extensions are not strong enough for the applications. Overcoming the subfield obstruction, we establish the following lower bound with a ``corrected error term" when the point set is a large Cartesian product. 

\begin{thm}\label{thm:mainD}
Let $q$ be a power of a prime $p$.
Let $A,B\subseteq \F_q$ be sets such that $|A|=m \geq 2, |B|=n \geq 2$, and $mn \leq q$. Then the set of points $A\times B\subseteq AG(2,q)$ determines at least $mn-\min \{p^{s_1}(n-1), p^{s_2}(m-1)\}+1$ directions, where $s_1$ is the largest integer such that $p^{s_1}n \leq q$ and $s_2$ is the largest integer such that $p^{s_2}m \leq q$.
\end{thm}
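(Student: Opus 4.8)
The plan is to run the Rédei-polynomial method and reduce everything to a single lower bound on the $X$-degree of a remainder polynomial, which is the point where the arithmetic of $\F_q$ (and the factor $p^{s}$) enters. By the reflection $(x,y)\mapsto(y,x)$ of the plane, which interchanges the roles of $A$ and $B$ (hence $m\leftrightarrow n$ and $s_1\leftrightarrow s_2$), sends a slope $t$ to $t^{-1}$, and preserves the number of determined directions, it suffices to prove that the number of non-determined affine directions is at most $q-mn+p^{s_2}(m-1)$; applying this to the reflected configuration yields the other term, and the minimum in the statement follows. Write $f_B(Z)=\prod_{b\in B}(Z-b)$ and form the Rédei polynomial $R(X,Y)=\prod_{a\in A}f_B(X+aY)=\prod_{a\in A,\,b\in B}(X+aY-b)$, which is monic of degree $mn$ in $X$. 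A slope $t\in\F_q$ is non-determined exactly when the values $\{b-at\}$ are pairwise distinct, i.e.\ when $R(X,t)$ is squarefree; since its $mn$ distinct roots then lie in $\F_q$, this forces $R(X,t)\mid X^q-X$.

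First I would pass to the remainder $E(X,Y):=(X^q-X)\bmod R(X,Y)\in\F_q[Y][X]$, which is well defined because $R$ is monic in $X$, and has $\deg_X E<mn$. For every non-determined $t$ we get $E(X,t)=0$ identically in $X$, so each such $t$ is a common root of all the $\F_q[Y]$-coefficients of $E$. Assigning weight one to both $X$ and $Y$, the leading weighted-homogeneous form of $R$ is $\prod_{a\in A}(X+aY)^n$, of weighted degree $mn$ and containing $X^{mn}$ with coefficient $1$; since the leading $X$-coefficient of $R$ lies in $\F_q$, reduction modulo $R$ does not raise the weighted degree, so $E$ has weighted degree at most $q$ and the coefficient of $X^i$ in $E$ has $Y$-degree at most $q-i$. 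Because $R(X,0)=f_B(X)^m$ has repeated roots for $m\ge2$ while $X^q-X$ is squarefree, we have $E\neq0$; letting $j^\ast=\deg_X E$ and $e_{j^\ast}(Y)\neq0$ be the corresponding top coefficient, the number of non-determined directions is at most $\deg_Y e_{j^\ast}\le q-j^\ast$. Everything therefore reduces to the degree bound $\deg_X E\ge mn-p^{s_2}(m-1)$.

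Next I would compute $E$ explicitly. The factors $f_B(X+aY)$ are pairwise coprime over $\F_q(Y)$, since a common root would force $(a-a')Y\in\F_q$, i.e.\ $Y\in\F_q$; so the Chinese Remainder Theorem applies. Substituting $Z=X+aY$ and using $a^q=a$ together with $Z^q\equiv Z\pmod{f_B(Z)}$ (the roots $b$ lie in $\F_q$), one finds $X^q\equiv X+a(Y-Y^q)\pmod{f_B(X+aY)}$ for each $a\in A$, hence $E\equiv a(Y-Y^q)\pmod{f_B(X+aY)}$. Writing $E=(Y-Y^q)\,T$, where $T$ is the interpolant over $\F_q(Y)$ with $\deg_X T<mn$ and $T(b-aY,Y)=a$ for all $a\in A,\ b\in B$, we have $\deg_X T=\deg_X E$. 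Equivalently $X^q\bmod R$ is the interpolant of the Frobenius map $\xi\mapsto\xi^q$ at the $mn$ nodes $\xi_{a,b}=b-aY$, and the goal becomes the interpolation-degree bound $\deg_X T\ge mn-p^{s_2}(m-1)$.

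The main obstacle is precisely this lower bound, which is sharp: for $q$ a square and $A=B=\F_{\sqrt q}$ the polynomial $R$ becomes a linearized polynomial, $\deg_X T=\sqrt q$ exactly, and $mn-p^{s_2}(m-1)+1=\sqrt q+1$ recovers the exact count of directions determined by a subfield. To prove the bound I would exploit the Frobenius recursion $\rho_{j+1}=\rho_j^{\,p}\bmod R$, with $\rho_0=X$ and $\rho_k=X^q\bmod R$ for $q=p^k$, which follows from the identity $(U+RV)^p=U^p+R^pV^p$ in characteristic $p$: one has $\deg_X\rho_j=p^j$ as long as $p^j<mn$, and the $X$-degree can drop only at the subsequent reduction steps. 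The crux is to show that the total degree loss across these steps is at most $p^{s_2}(m-1)$; equivalently, that $\deg_X T<mn-p^{s_2}(m-1)$ would force the node set $B-AY$, and hence $A$, into an $\F_p$-linear/subfield configuration too large to be compatible with $p^{s_2+1}m>q$. I expect this step, controlling how the reductions interact with the additive structure of $A$ and $B$, to require the bulk of the work; the degenerate ranges in which $mn-p^{s_2}(m-1)\le1$ are handled separately, since there the asserted bound is no stronger than the trivial one.
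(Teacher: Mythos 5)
Your reduction is sound and is essentially an equivalent reformulation of the R\'edei--Sz\H{o}nyi setup: the remainder $E=(X^q-X)\bmod R$ plays the role of $x^q-x-R(x,y)F(x,y)$ with $F$ the Sz\H{o}nyi extension (your total-degree observation recovers the property $\deg h_i\leq i$), the bound ``non-determined directions $\leq q-\deg_X E$'' is the standard coefficient-counting claim, the reflection symmetry correctly disposes of the $\min$, and the CRT computation $E=(Y-Y^q)T$ is correct and a nice explicit description. But the proposal stops exactly where the theorem actually lives: the lower bound $\deg_X E\geq mn-p^{s_2}(m-1)$ is asserted as the goal and then only accompanied by a plan (Frobenius recursion $\rho_{j+1}=\rho_j^p\bmod R$, ``the total degree loss is at most $p^{s_2}(m-1)$'', ``I expect this step to require the bulk of the work''). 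Nothing in the proposal shows why the degree cannot drop further, which is precisely the subfield obstruction the theorem is about; as written, this is a genuine gap, not a completed proof.

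For comparison, the paper proves the degree bound by specializing to $y=0$: the monic degree-$q$ polynomial $G(x)=R(x,0)F(x,0)$ (your $x^q-x-E(x,0)$) has every $b\in B$ as a root of multiplicity at least $m$, and $\sum_j k_j=q$ over all its distinct roots. Extracting the largest $p^t$ dividing all multiplicities and substituting $y=x^{p^t}$ produces $H(y)=\prod_j(y-\beta_j^{p^t})^{\alpha_j}$ with some $\alpha_j$ prime to $p$, so by Lemma~\ref{lem:derivative0} the derivative $H'$ is not identically zero; since each $\beta_j^{p^t}$ is a root of $H'$ of multiplicity at least $\alpha_j-1$, some coefficient $c_{p^ti_0}$ with $p^ti_0\geq q-p^t(N-1)\geq mn-p^t(n-1)$ is nonzero, and $t\leq s_1$ because all multiplicities are at least $p^t$ and at least $m$ on $B$. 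This Stepanov-style multiplicity/derivative argument is the missing core; your Frobenius-recursion sketch would need to be replaced by (or developed into) something of this strength before the proof is complete. Note also that the paper only needs the bound at the single specialization $y=0$, so you do not need to control $\deg_X E$ over $\F_q[Y]$ generically.
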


Theorem~\ref{thm:mainD} is a generalization of Theorem~\ref{DSW}. Indeed, when $q=p$ is a prime, we have $s_1=s_2=0$, and thus Theorem~\ref{thm:mainD} recovers Theorem~\ref{DSW}. Our proof is different from the approach by Di Benedetto, Solymosi, and White \cite{DSW}; see Remark~\ref{rem:Stepanov}. When $q$ is a square, and $A=B=\F_{\sqrt{q}}$ is given by the subfield with size $\sqrt{q}$, the direction set determined by $A \times B$ is $\F_{\sqrt{q}} \cup \{\infty\}$, and our bound gives $q-\sqrt{q}(\sqrt{q}-1)+1=\sqrt{q}+1$, which is sharp. More generally, if $B$ is a subfield of $\F_q$ and $A$ is a subspace over $B$ with $|A||B|=q$, our bound is also sharp. 

When $|U|\leq q$, the best-known lower bound on $|\mathcal{D}_U|$ is roughly of the form $|U|/\sqrt{q}$, due to Dona \cite{D21}.
While the lower bound given by Theorem~\ref{thm:mainD} could be trivial when $A \times B$ is small, we remark that Theorem~\ref{thm:mainD} does improve Dona's bound significantly when $U$ is a large Cartesian product. For example, if $q$ is a non-square, and $\sqrt{q/p}=o(\min \{|A|, |B|\})$, then Theorem~\ref{thm:mainD} implies that $A \times B$ determines $(1-o(1))|A||B|$ directions. In particular, we have the following corollary. 

\begin{cor}\label{cor:sum-product}
Let $q=p^{2r+1}$, where $p$ is a prime and $r$ is a non-negative integer. If $A \subseteq \F_q$ such that $2p^r<|A|<\sqrt{q}$, then 
$$
\bigg|\frac{A-A}{A-A}\bigg| >\frac{|A|^2}{2}.
$$
\end{cor}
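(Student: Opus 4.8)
The plan is to interpret the ratio set $\frac{A-A}{A-A}$ as the set of finite directions determined by the Cartesian product $A \times A$ in $AG(2,q)$, and then invoke Theorem~\ref{thm:mainD} directly. First I would observe that a point of $A \times A$ has the form $(a,b)$ with $a,b \in A$, so the direction determined by two distinct points $(a_1,b_1)$ and $(a_2,b_2)$ equals $\frac{b_2-b_1}{a_2-a_1}$ when $a_1 \neq a_2$, and equals $\infty$ when $a_1=a_2$. Letting $a_1,a_2$ range over distinct elements of $A$ and $b_1,b_2$ over all of $A$, the finite directions sweep out exactly $\frac{A-A}{A-A}$, while the vertical direction $\infty$ occurs because $|A|\geq 2$. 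Since $\infty \notin \F_q$, the two contributions are disjoint, giving the clean identity
\[
|\mathcal{D}_{A\times A}| = \left|\frac{A-A}{A-A}\right| + 1.
\]

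Next I would apply Theorem~\ref{thm:mainD} with $B=A$ and $m=n=|A|$. The hypothesis $|A|<\sqrt{q}$ yields $mn=|A|^2<q$, so the theorem applies, and by symmetry $s_1=s_2=:s$, where $s$ is the largest integer with $p^s|A|\leq q$. The one computation that needs care is pinning down $s=r$: from $|A|<\sqrt{q}=p^{r+1/2}<p^{r+1}$ we get $p^r|A|<p^{2r+1}=q$, so $s\geq r$; and from $|A|>2p^r>p^r$ we get $p^{r+1}|A|>p^{2r+1}=q$, so $s\leq r$. Substituting $s=r$ into the bound of Theorem~\ref{thm:mainD} gives $|\mathcal{D}_{A\times A}| \geq |A|^2 - p^r(|A|-1)+1$, and combining with the identity above yields
\[
\left|\frac{A-A}{A-A}\right| \geq |A|^2 - p^r(|A|-1).
\]

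Finally I would use the lower bound $|A|>2p^r$ once more to close the argument: it gives $p^r<|A|/2$, whence $p^r(|A|-1)<p^r|A|<\frac{|A|^2}{2}$, and therefore $|A|^2 - p^r(|A|-1) > \frac{|A|^2}{2}$, as claimed. There is no genuine obstacle beyond correctly translating the ratio set into directions and verifying $s=r$, since all the essential difficulty is already absorbed into Theorem~\ref{thm:mainD}, which supplies the strong $(1-o(1))|A|^2$ lower bound on the number of directions for a large square Cartesian product. The only subtlety worth flagging is that the final inequality is strict: this is guaranteed because $|A|>2p^r$ is itself strict, so the chain $p^r(|A|-1)<p^r|A|<\frac{|A|^2}{2}$ is strict throughout.
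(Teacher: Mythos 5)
Your proposal is correct and follows exactly the route the paper intends: the paper states Corollary~\ref{cor:sum-product} as an immediate consequence of Theorem~\ref{thm:mainD} without writing out the details, and your derivation (identifying $\mathcal{D}_{A\times A}$ with $\frac{A-A}{A-A}\cup\{\infty\}$, verifying $s_1=s_2=r$ from the two-sided bound on $|A|$, and closing with $p^r<|A|/2$) is precisely the intended argument.
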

\begin{proof}
Let $U=A \times A \subset AG(2,q)$; then $|\mathcal{D}_U|=\frac{A-A}{A-A}$. Since $2p^r<|A|<\sqrt{q}$, Theorem~\ref{thm:mainD} (with $s_1=s_2=r$) implies that $|\mathcal{D}_U|>|A|^2-p^r|A|>|A|^2/2$.
\end{proof}

Results similar to Corollary~\ref{cor:sum-product} have played an important role in recent breakthroughs on sum-product type estimates over $\F_p$ (see for example \cite{MPRRS19, SS22}). It would be interesting to see if Corollary~\ref{cor:sum-product} would imply new sum-product type estimates over $\F_q$. 

\medskip

\textbf{Notation.} We follow standard notation for arithmetic operations among sets. Let $q$ be a prime power. Given two sets $A,B \subseteq \F_q$, we write $A-B=\{a-b: a \in A, b \in B\}$ and $A/B=\{a/b: a \in A, b \in B\}$, where $a/0$ is defined to be $\infty$ for each $a\in \F_q$.


\textbf{Structure of the paper.} In Section~\ref{sec:sec2}, we prove Theorem~\ref{thm:mainD}. 
In Section~\ref{sec:app}, we use Theorem~\ref{thm:mainD} to deduce Theorem~\ref{thm:Cayley}. In Section~\ref{sec:sec4}, we construct nontrivial families of generalized Paley graphs for which their clique number can be explicitly determined, and deduce Theorem~\ref{thm:cliquenumber} as a special case. 

\section{Improved lower bound on the size of direction sets}\label{sec:sec2}
Instead of introducing hyper-derivatives and binomial coefficients as in \cite{Yip2, Yip1}, to prove Theorem~\ref{thm:mainD}, we consider the following folklore lemma, which characterizes polynomials with derivative identically zero. 

\begin{lem}\label{lem:derivative0}
Let $q$ be a power of a prime $p$. Let $f(x) \in \F_q [x]$ be a nonzero polynomial such that its derivative $f'=0$. Then the multiplicity of each root of $f$ (in the algebraic closure $\overline{\F_q}$) is a multiple of $p$.
\end{lem}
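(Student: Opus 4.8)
The plan is to reduce the statement to the assertion that $f$ is a perfect $p$-th power in $\F_q[x]$. First I would expand $f(x)=\sum_i a_i x^i$ and read off the hypothesis $f' \equiv 0$ coefficient by coefficient: the coefficient of $x^{i-1}$ in $f'$ is $i a_i$, so $i a_i = 0$ in $\F_q$ for every $i$. Since $\operatorname{char}\F_q = p$, this forces $a_i = 0$ whenever $p \nmid i$. Consequently only exponents divisible by $p$ survive, and we may write $f(x) = g(x^p)$ for some nonzero $g(x)=\sum_j a_{pj}\,x^j \in \F_q[x]$.

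Next I would invoke the fact that $\F_q$ is perfect: the Frobenius map $c \mapsto c^p$ is an automorphism of $\F_q$, hence surjective, so each coefficient of $g$ admits a $p$-th root in $\F_q$. Writing $a_{pj} = c_j^{\,p}$ and setting $h(x) = \sum_j c_j x^j \in \F_q[x]$, the freshman's dream in characteristic $p$ gives
\[
f(x) = g(x^p) = \sum_j c_j^{\,p} x^{pj} = \Big(\sum_j c_j x^j\Big)^{p} = h(x)^p .
\]
Thus $f$ is the $p$-th power of a polynomial over $\F_q$.

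The conclusion about multiplicities is then immediate from the factorization $f = h^p$: if $\alpha \in \overline{\F_q}$ is a root of $f$ of multiplicity $m$, then $\alpha$ is a root of $h$ of some multiplicity $k$, and comparing factorizations over $\overline{\F_q}$ yields $m = pk$, a multiple of $p$.

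There is no genuine obstacle here; the one point that must not be glossed over is the use of the surjectivity of Frobenius on $\F_q$, that is, the perfectness of $\F_q$. This is exactly where working over a finite field (rather than an arbitrary field of characteristic $p$) is essential: over a non-perfect field such as $\F_p(t)$ the polynomial $x^p - t$ has vanishing derivative yet is irreducible, so the analogue of the lemma fails. I would therefore flag the surjectivity of Frobenius explicitly as the crux of passing from $f(x) = g(x^p)$ to $f = h^p$.
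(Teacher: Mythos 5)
Your proof is correct, but it follows a different route from the paper's. The paper argues directly on the factorization over $\overline{\F_q}$: writing $f=\prod_j(x-\beta_j)^{\alpha_j}$, it computes $f'$ by the product rule, factors out $\prod_k(x-\beta_k)^{\alpha_k-1}$, and evaluates the remaining cofactor sum $\sum_i\alpha_i\prod_{j\neq i}(x-\beta_j)$ at each $\beta_i$ to conclude $\alpha_i=0$ in $\F_q$, i.e.\ $p\mid\alpha_i$. You instead work coefficient-wise to get $f(x)=g(x^p)$ and then use surjectivity of Frobenius to write $f=h^p$ with $h\in\F_q[x]$, from which the multiplicity statement is immediate. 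Your route yields the stronger structural fact that $f$ is a $p$-th power in $\F_q[x]$, while the paper's argument is field-agnostic. One inaccuracy in your closing remark: perfectness is \emph{not} the crux of the lemma as stated, and the lemma does not fail over non-perfect fields. Over $\F_p(t)$ the polynomial $x^p-t$ factors as $(x-t^{1/p})^p$ in the algebraic closure, so its unique root still has multiplicity $p$; what fails there is only the intermediate claim that $f$ is a $p$-th power in the base field's polynomial ring. For the multiplicity conclusion you could equally well take $p$-th roots of the coefficients of $g$ in $\overline{\F_q}$, and indeed the paper's proof uses nothing about $\F_q$ beyond its characteristic.
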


Next, we present the proof of Theorem~\ref{thm:mainD} using R\'edei polynomial with Sz\H{o}nyi's extension, together with a simple variant of Stepanov's method (see Remark~\ref{rem:Stepanov}).

\begin{proof}[Proof of Theorem~\ref{thm:mainD}]
Let $A=\{a_1,a_2, \ldots,a_m\}$ and $B=\{b_1, b_2, \ldots, b_n\}$ be subsets of $\F_q$. Let $U=A \times B \subseteq AG(2,q)$. Note that the direction set determined by $U$ only depends on $A-A$ and $B-B$, so without loss of generality we can assume that $0 \in A, B$. It suffices to prove $|\mathcal{D}_U| \geq mn-p^{s_1}(n-1)+1$. For the other bound, it follows from switching the role of $A$ and $B$. 

The \emph{R\'edei polynomial} \cite{LR73} of $U$ is defined as 
$$ R(x,y) = \prod_{(a,b)\in U} (x+ay-b)=\prod_{i=1}^m \prod_{j=1}^n (x+a_iy-b_j) .$$
Note that if $y_0 \in \F_q \setminus \mathcal{D}_U$, then $ay_0-b \neq a'y_0-b'$ for any two distinct points $(a,b), (a',b')$ in $U$. Thus, if $y_0 \in \F_q \setminus \mathcal{D}_U$, then $R(x,y_0)$ divides $x^q-x$. Sz\H{o}nyi \cite{S96, S99} (see also \cite{DSW,Yip2}) showed that there exist a polynomial $F(x,y) \in \F_q[x,y]$ and polynomials $h_i(y) \in \F_q[y]$ with $\operatorname{deg} (h_i) \leq i$, such that 
\begin{equation}\label{eq1}
 R(x,y)F(x,y) = x^q + h_1(y)x^{q-1} + h_2(y)x^{q-2} + \cdots + h_q(y),
\end{equation}
and for each $y_0 \in \F_q \setminus \mathcal{D}_U$,
$$
R(x,y_0)F(x,y_0)=x^q-x.
$$
This special polynomial $F(x,y)$ is also known as \emph{Sz\H{o}nyi's extension polynomial}; the structure of $F(x,y)$ is complicated, we refer to \cite[Section 2.2]{Yip2} for an explicit formula of $F(x,y)$.

Let $c_i=h_{q-i}(0)$ for $0 \leq i \leq q-1$. Then equation~\eqref{eq1} implies that
\begin{equation} \label{eq2}
 G(x):=R(x,0)F(x,0) = F(x,0)\prod_{j=1}^n (x-b_j)^m =x^q+\sum_{i=0}^{q-1} c_ix^i.
\end{equation}
We claim that if $c_i \neq 0$ for some $0 \leq i \leq q-1$, then we have $|\mathcal{D}_U|\geq i+1$ \cite{LR73, S99}. We include a short proof for the sake of completeness.  Indeed, if $c_i \neq 0$, then $h_{q-i} \not 0$, which implies that $h_{q-i}(y_0)=0$ for at most $q-i$ distinct $y_0 \in \F_q$. However, we know that $h_{q-i}(y_0)=0$ for each $y_0 \in \F_q \setminus \mathcal{D}_U$. This shows that $|\F_q \setminus \mathcal{D}_U|\leq q-i$, and thus $|\mathcal{D}_U| \geq i+1$ (the extra $1$ comes from the infinity direction). 

Next we factorize $G(x)$ into linear factors over $\overline{\F_q}$: 
$$
G(x)=\prod_{j=1}^N (x-\beta_j)^{k_j},
$$
where $\beta_1, \beta_2, \ldots, \beta_N$ are the distinct roots of $G$, and $\beta_j=b_j$ for each $1 \leq j \leq n$. In particular, we have $k_j \geq m$ for each $1 \leq j \leq n$, $k_j \geq 1$ for each $n<j \leq N$, and $\sum_{j=1}^N k_j=q$. Let $t$ be the largest integer such that $p^t \mid k_j$ for all $1 \leq j \leq N$. Then we can write $k_j=p^t \alpha_j$ for each $1 \leq j \leq N$, with at least one $\alpha_j$ not divisible by $p$, so that
$$
G(x)=\prod_{j=1}^N (x-\beta_j)^{k_j}=\prod_{j=1}^N (x^{p^t}-\beta_j^{p^t})^{\alpha_j}.
$$

Observe that the map $x \mapsto x^{p^t}$ is an automorphism on $\F_q$ since $\gcd(p^t, q-1)=1$. Set $y=x^{p^t}$, then we have 
\begin{equation} \label{eq3}
H(y):=\prod_{j=1}^N (y-\beta_j^{p^t})^{\alpha_j}=G(x).
\end{equation}
Note that $\beta_j^{p^t}$ are still distinct since the map $\beta \mapsto \beta^{p^t}$ is also an automorphism on $\overline{\F_q}$. Since not all $\alpha_j$ are multiples of $p$, it follows from Lemma~\ref{lem:derivative0} that $H'$ is not identically zero. Thus, in view of the factorization given in equation~\eqref{eq3}, $H'$ has degree at least
$$
\sum_{j=1}^N (\alpha_j-1)
$$
since for each $1 \leq j \leq N$, the root $\beta_j^{p^t}$ is a root of $H'$ with multiplicity at least $\alpha_j-1$.

On the other hand, note that equation~\eqref{eq2} becomes: 
$$
H(y)=x^q+\sum_{i=0}^{q-1} c_ix^i=y^{q/p^t}+\sum_{i=0}^{q/p^t-1} d_iy^i
$$
as polynomials over $\F_q$, where $d_i=c_{p^ti}$ for each $0 \leq i \leq q/p^t-1$. Note that $q/p^t$ is a multiple of $p$ and thus
$$
H'(y)=\sum_{i=1}^{q/p^t-1} id_iy^{i-1}.
$$
It follows that there is $1 \leq i_0 \leq q/p^t-1$ such that $i_0d_{i_0} \neq 0$ and 
$$
i_0-1 \geq \sum_{j=1}^N (\alpha_j-1).
$$
Therefore, our earlier claim implies that
\begin{equation} \label{eq4}
|\mathcal{D}_U|\geq p^ti_0+1 \geq p^t \bigg(\sum_{j=1}^N (\alpha_j-1)+1\bigg)+1=\sum_{j=1}^N k_j -p^t (N-1)+1 =q-p^t(N-1)+1.
\end{equation} 

Recall that each $k_j$ is a multiple of $p^t$. In particular, $k_j \geq p^t$ for each $j>n$. It follows that
$$
N-n \leq \frac{q-\sum_{j=1}^{n} k_j}{p^t} \leq \frac{q-mn}{p^t},
$$
that is, $p^tN \leq q-mn+p^tn$. Putting this estimate into equation~\eqref{eq4}, we conclude that
$$
|\mathcal{D}_U|\geq q-p^tN+p^t+1\geq mn-p^t(n-1)+1.
$$
Finally, note that $t \leq s_1$, since $s_1$ is the largest integer such that $p^{s_1}n \leq q$. This completes the proof.
\end{proof}

\begin{rem}\label{rem:Stepanov}
Our proof differs from the original proof of Di Benedetto,  Solymosi, and White \cite{DSW} on Theorem~\ref{DSW} (which corresponds to the case that $q$ is a prime). In particular, \cite[Lemma 6]{DSW} plays an important role in their proof. Such a lemma has been extended to all finite fields in \cite[Lemma 3.1]{Yip2} by the author, which did lead to a generalization of their result (see \cite[Theorem 1.6]{Yip2}). However, such a generalization only yields a tiny improvement on the trivial upper bound on the clique number of generalized Paley graphs \cite[Theorem 1.9]{Yip2}.

We replaced this key lemma with a simple variant of Stepanov's method, inspired by the applications of Stepanov's method in \cite{Yip2, Yip1}. In particular, to apply Stepanov's method, one needs to find a low-degree polynomial that vanishes on each element of a desired set with high multiplicity, which is guaranteed by the Cartesian product structure of the point set. It is also crucial that the polynomial constructed is \emph{not identically zero}, which is ensured by the non-vanishing of binomial coefficients described in Theorem~\ref{thm:Nn} in \cite{Yip2, Yip1}. Here we use a different yet simpler criterion, namely, Lemma~\ref{lem:derivative0}, to achieve this purpose more effectively. 
\end{rem}
    
Next, we deduce two interesting corollaries of Theorem~\ref{thm:mainD}. 

The analogue of the Cauchy-Davenport theorem over finite fields (equivalently, vector spaces over prime fields) was established by Eliahou and Kervaire \cite{EK98}. We use Theorem~\ref{thm:mainD} to deduce a special case. 

\begin{cor}\label{cor:CD}
Let $q$ be a power of a prime $p$. Let $A \subseteq \F_q$ such that $|A|>q/p$. Then $|A-A|\geq \min \{2|A|-q/p, q\}$. In particular, if $q/p|A| \to 0$, then $|A-A|\geq \min \{(2-o(1))|A|, q\}$.
\end{cor}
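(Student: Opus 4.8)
The plan is to deduce the bound from Theorem~\ref{thm:mainD} by choosing a point set whose directions encode the difference set $A-A$. Specifically, I would take $B=\{0,1\}$ and apply the theorem to $U=A\times B\subset AG(2,q)$, with $m=|A|$ and $n=2$. The first step is the elementary observation that the directions determined by $U$ are exactly $0$ (from horizontal pairs $(a,0),(a',0)$ and $(a,1),(a',1)$), $\infty$ (from the vertical pair above each $a\in A$), and the inverses $\frac{1}{a'-a}$ of the nonzero differences (from the remaining mixed pairs); since inversion is a bijection on $\F_q^*$ and $A-A$ is symmetric, this yields the clean identity $|\mathcal{D}_U|=|A-A|+1$. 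Thus a lower bound on $|\mathcal{D}_U|$ translates directly into the desired lower bound on $|A-A|$.

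Next, assuming $2|A|\le q$ so that the hypothesis $mn\le q$ of Theorem~\ref{thm:mainD} holds, I would carry out the arithmetic that makes the constant $q/p$ appear. Writing $q=p^k$, the largest integer $s_1$ with $p^{s_1}n=2p^{s_1}\le q$ is $s_1=k-1$, so $p^{s_1}(n-1)=p^{k-1}=q/p$. On the other hand, the assumption $|A|>q/p$ forces $p\cdot|A|>q$, hence $s_2=0$ and $p^{s_2}(m-1)=|A|-1$. Therefore the quantity subtracted in Theorem~\ref{thm:mainD} satisfies $\min\{p^{s_1}(n-1),p^{s_2}(m-1)\}=\min\{q/p,|A|-1\}\le q/p$, and the theorem gives $|\mathcal{D}_U|\ge 2|A|-q/p+1$. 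Combined with $|\mathcal{D}_U|=|A-A|+1$, this is precisely $|A-A|\ge 2|A|-q/p$.

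Finally I would treat the complementary range $2|A|>q$, where the hypothesis of Theorem~\ref{thm:mainD} fails. Here a pigeonhole argument suffices: for every $c\in\F_q$ the sets $A$ and $A+c$ have total size $2|A|>q$, so they intersect, whence $c\in A-A$ and $A-A=\F_q$. This gives $|A-A|=q\ge\min\{2|A|-q/p,q\}$, and explains the role of the $q$ term in the minimum. Putting the two cases together proves the bound, and the ``in particular'' statement follows at once by factoring $2|A|-q/p=|A|\big(2-\tfrac{q/p}{|A|}\big)=(2-o(1))|A|$ whenever $\tfrac{q}{p|A|}\to 0$.

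The conceptual crux is choosing the right auxiliary set $B$: taking $B=\{0,1\}$ is what converts the additive difference set into a direction set while keeping $n=2$, and it is exactly this choice $n=2$ that makes $p^{s_1}=q/p$, producing the constant in the statement. I expect the only genuinely delicate points to be the bookkeeping at the boundary between the two cases and verifying that the minimum in Theorem~\ref{thm:mainD} is always at most $q/p$ (rather than $|A|-1$); once the identity $|\mathcal{D}_U|=|A-A|+1$ and the value $p^{s_1}=q/p$ are in hand, the remainder is routine.
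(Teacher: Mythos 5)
Your proposal is correct and follows essentially the same route as the paper: take $B=\{0,1\}$, identify the direction set of $A\times B$ with $A-A$ (up to the inversion $c\mapsto 1/c$ and the point $\infty$, which does not affect cardinality), apply Theorem~\ref{thm:mainD} with $n=2$ so that $p^{s_1}=q/p$, and handle $2|A|>q$ by pigeonhole. Your write-up is in fact slightly more careful than the paper's at the boundary $2|A|=q$ and in checking that the minimum in Theorem~\ref{thm:mainD} equals $q/p$ rather than $|A|-1$.
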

\begin{proof}
Let $B=\{0,1\}$. Then the direction set determined by $A \times B$ is $(A-A) \cup \{\infty\}$. If $|A|<q/2$, then Theorem~\ref{thm:mainD} implies that the direction set has size at least $2|A|-q/p+1$ since $p^{s_1}=q/p$, and the lower bound on $|A-A|$ follows. If $|A|>q/2$, then the direction set determined by $A \times B$ is $\F_q \cup \{\infty\}$ by the pigeonhole principle \cite[Lemma 2.1]{D21}. 
\end{proof}

Identify $\F_q$ with $\F_p^{n}$. Corollary~\ref{cor:CD} is sometimes sharp. For example, if $p\geq 3$ and $A=\{0,1\} \times \F_p^{n-1}$, then $A-A=\{-1,0,1\} \times \F_p^{n-1}$. 

Let $K$ be a proper subfield of $\F_q$. We know if $A \subseteq \F_q$ is a vector space over $K$, then $(A-A)K=AK=A$ and thus $|AK|=|A|$. We use Theorem~\ref{thm:mainD} to deduce that $(A-A)K^*$ expands significantly as long as $|A| \geq q/|K|+1$. 

\begin{cor}
Let $q$ be a prime power. Let $K$ be a proper subfield of $\F_q$. If $A \subseteq \F_q$ such that $|A|\geq q/|K|+1$, then $|(A-A)K|>|A|(|K|-|K|/p-1)$.
\end{cor}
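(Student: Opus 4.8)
The plan is to realize $(A-A)K$ as (essentially) the set of directions determined by a Cartesian product in $AG(2,q)$, and then exploit the fact that this product is forced to have more than $q$ points. Concretely, I would set $U = K \times A \subseteq AG(2,q)$, taking the first coordinate from $K$ and the second from $A$. For two distinct points $(c,a),(c',a') \in U$ with $c \neq c'$, the slope they determine is $(a-a')/(c-c')$; since $K$ is a field we have $c-c' \in K^*$, and hence $(a-a')/(c-c') = (a-a')(c-c')^{-1} \in (A-A)K^* \subseteq (A-A)K$. The only other slopes are $0$ (when $a=a'$, and $0 \in (A-A)K$) and $\infty$ (when $c=c'$). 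Thus $\mathcal{D}_U \subseteq (A-A)K \cup \{\infty\}$, so it suffices to prove that $U$ determines every finite direction.

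Here the crucial observation is that Theorem~\ref{thm:mainD} does \emph{not} apply to $U$: one has $|U| = |K|\cdot|A| = |K|(q/|K|+1) = q + |K| > q$, which violates the standing hypothesis $mn \leq q$. The point is that this failure is precisely what we want. Indeed, for any $y \in \F_q$ the projection $(c,a) \mapsto a - yc$ maps $U$ into $\F_q$, and since $|U| > q$ it cannot be injective; so there exist distinct $(c,a),(c',a') \in U$ with $a - yc = a' - yc'$. As $c = c'$ would force $a = a'$, we must have $c \neq c'$, and therefore $y = (a-a')/(c-c') \in \mathcal{D}_U$. Hence $\mathcal{D}_U \supseteq \F_q$; this is the same pigeonhole phenomenon invoked in the large-set case of Corollary~\ref{cor:CD}. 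Combining the two containments gives $\F_q \subseteq \mathcal{D}_U \subseteq (A-A)K \cup \{\infty\}$, and since $0 \in (A-A)K$ we conclude $(A-A)K = \F_q$, i.e. $|(A-A)K| = q$.

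It then remains only to verify the elementary inequality $q > |A|(|K| - |K|/p - 1)$. Writing $k = |K|$ and expanding, $|A|(k - k/p - 1) = (q/k+1)(k-k/p-1) = q - q/p - q/k + k - k/p - 1$, so the claim is equivalent to $q/p + q/k + k/p + 1 > k$. Since $K$ is a \emph{proper} subfield, $k = p^s$ with $s$ a proper divisor of $e$ (where $q = p^e$), so $s \leq e/2$ and hence $k \leq \sqrt q$; this gives $q/k \geq k$, and adding the remaining positive terms yields the inequality at once.

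The only genuine obstacle is conceptual. The natural instinct is to feed $K \times A$ directly into Theorem~\ref{thm:mainD}, but the subfield structure makes the theorem's bound far too weak: a balanced product $K \times A'$ with $|A'| = q/|K|$ (the largest subset for which $mn \leq q$) forces $p^{s_1} = |K|$ and only yields on the order of $q/|K|$ directions, i.e. essentially no expansion, reflecting exactly the subfield obstruction discussed in the introduction. The resolution is to notice that restoring even a single element of $A$ pushes $|U|$ past $q$, at which point the pigeonhole argument supersedes the theorem and delivers all $q$ directions. Once one sees this, the direction-set containment and the final numerical check are routine.
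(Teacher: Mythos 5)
Your proof is correct, but it takes a genuinely different route from the paper and in fact establishes a stronger conclusion. The paper applies Theorem~\ref{thm:mainD} to the product $A\times K^*$, which has $(q/|K|+1)(|K|-1)\leq q$ points; there $p^{s_2}=|K|/p$, and the theorem gives $|(A-A)K|\geq |A|(|K|-1)-(|K|/p)(|A|-1)$, from which the stated inequality follows. You instead take $K\times A$, note that it has $q+|K|>q$ points, and invoke the pigeonhole observation (the same one cited as \cite[Lemma 2.1]{D21} in the proof of Corollary~\ref{cor:CD}) to conclude that every direction is determined; since the finite directions of $K\times A$ are exactly $(A-A)K^*=(A-A)K$, this yields $(A-A)K=\F_q$, i.e.\ $|(A-A)K|=q$. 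Your reduction of the stated bound to $q/p+q/|K|+|K|/p+1>|K|$ and the use of $|K|\leq\sqrt{q}$ (valid since $K$ is a proper subfield) are both correct, so the argument is complete. What your approach buys is a sharper and more elementary conclusion at this exact set size, with no appeal to the main theorem at all: the hypothesis $|A|=q/|K|+1$ sits precisely at the pigeonhole threshold. What the paper's route buys is that it exercises Theorem~\ref{thm:mainD} (the corollary is presented as an illustration of that theorem, in the spirit of showing that $(A-A)K$ expands once $|A|$ exceeds $q/|K|$), and its mechanism would still say something in regimes where the product under consideration does not exceed $q$ points. Your closing remark that feeding a balanced product $K\times A'$ with $|A'|=q/|K|$ into Theorem~\ref{thm:mainD} gives essentially nothing is accurate and correctly identifies the subfield obstruction, though it is worth noting the paper sidesteps this not by pigeonhole but by using the unbalanced product $A\times K^*$, for which $p^{s_2}=|K|/p$ rather than $|K|$.
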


\begin{proof}
Let $B=K^*$. Then the direction set determined by $A \times B$ is given by $$\frac{A-A}{B-B}=\{\infty\} \cup\frac{A-A}{K^*}=\{\infty\} \cup(A-A)K.$$ Since $|A|\geq q/|K|+1$, it follows that $p^{s_2}\leq|K|/p$, where $s_2$ is the largest integer such that $p^{s_2}|A|\leq q$. Thus, Theorem~\ref{thm:mainD} implies that  
$$
|(A-A)K| \geq |A|(|K|-1)-|K|(|A|-1)/p>|A|(|K|-|K|/p-1),
$$ 
as required.
\end{proof}

\section{Applications to clique number of Cayley graphs}\label{sec:app}

In this section, we use Theorem~\ref{thm:mainD} to deduce Theorem~\ref{thm:Cayley} on the clique number of Cayley graphs.

\begin{proof}[Proof of Theorem~\ref{thm:Cayley}]
Let $q=p^{2r+1}$, where $r$ is a non-negative integer. Let $C$ be a maximum clique in $\operatorname{Cay}(\F_q^+;S)$ with $|C|=n$. We may assume that $n>p^{r}$, otherwise clearly we are done. 

Since $S/S \neq \F_q^*$, by Lemma~\ref{lem:tub}, $n \leq \sqrt{q}$. Consider the point set $U=C \times C \subseteq AG(2,q)$; then we have $|U|=n^2\leq q$. Note that the direction set determined by $U$ is
$$\mathcal{D}_U=\frac{C-C}{C-C} \subseteq \frac{S}{S} \cup \{0,\infty\}.$$
In particular, we have 
$$|\mathcal{D}_U|\leq |S/S|+2.$$ 

Let $s$ be the largest integer such that $p^sn \leq q$; then we have $s \leq r$. It follows from Theorem~\ref{thm:mainD} that 
$$
|\mathcal{D}_U| \geq n^2-p^s(n-1)+1 \geq n^2-p^r(n-1)+1.
$$
Comparing the above two estimates, we obtain that
$$
n^2-p^r(n-1)+1 \leq |S/S|+2,
$$
that is,
$$
n \leq \frac{p^r}{2}+\sqrt{|S/S|+\bigg(\frac{p^r}{2}-1\bigg)^2} < \frac{p^r}{2}+\sqrt{|S/S|}+\left|\frac{p^r}{2}-1\right|.
$$
When $r=0$, we obtain that $n<\sqrt{|S/S|}+1$; when $r \geq 1$, we obtain that $n<\sqrt{|S/S|}+p^r-1$. 
\end{proof}

\begin{rem}
For the clique number of $GP(q,d)$, our new bound in Corollary~\ref{cor:main} is slightly weaker than the upper bounds in \cite{Yip1} and \cite[Theorem 5.10]{Yip2} in the special case $d \mid (p-1)$ for which the base-$p$ representation of $\frac{q-1}{d}$ is simple and Theorem~\ref{thm:Nn} is the most effective. However, in the generic case, our new bound improves the best-known upper bound \cite[Theorem 6.5]{Yip2} substantially.
\end{rem}

\section{Generalized Paley graphs with exceptionally large cliques}\label{sec:sec4}

The next proposition allows us to determine the clique number of families of generalized Paley graphs with exceptionally large cliques. While it is not hard to deduce it from Theorem~\ref{thm:Nn}, it is rather surprising that Theorem~\ref{thm:Nn} could be used to determine the clique number, even for special generalized Paley graphs.

\begin{prop}\label{prop:cliquenumber2}
Let $q$ be a power of an odd prime $p$ and let $K$ be a proper subfield of $\F_q$. Let $d \geq 2$ be a divisor of $\frac{q-1}{|K|-1}$ such that $q< d|K|(|K|+1)$. Let $r$ be the remainder of $\frac{q-1}{d}$ divided by $p|K|$. If $r<(p-1)|K|$, 
then $\omega(GP(q,d))=|K|$.    
\end{prop}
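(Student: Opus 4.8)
The plan is to establish the two inequalities $\omega(GP(q,d)) \geq |K|$ and $\omega(GP(q,d)) \leq |K|$ separately, with the upper bound being where all the work lies. For the lower bound, I would first verify that the subfield $K$ forms a clique: since $d \mid \frac{q-1}{|K|-1}$, every nonzero element of $K$ is a $d$-th power in $\F_q^*$. Indeed, $K^*$ is the multiplicative subgroup of $\F_q^*$ of order $|K|-1$, and because $d(|K|-1) \mid (q-1)$, each element of $K^*$ lies in $(\F_q^*)^d$; hence any two distinct elements of $K$ differ by a $d$-th power, so $K$ is a clique of size $|K|$. This gives $\omega(GP(q,d)) \geq |K|$.

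For the upper bound, I would run the direction-counting argument exactly as in the proof of Theorem~\ref{thm:main}. Let $C$ be a maximum clique with $|C| = n$, and suppose for contradiction that $n > |K|$. As before, the direction set of $U = C \times C$ satisfies $\mathcal{D}_U = \frac{C-C}{C-C} \subseteq (\F_q^*)^d \cup \{0, \infty\}$, so $|\mathcal{D}_U| \leq \frac{q-1}{d} + 2$. Applying Theorem~\ref{thm:mainD} gives $|\mathcal{D}_U| \geq n^2 - p^{s}(n-1) + 1$, where $s$ is the largest integer with $p^s n \leq q$. Combining these yields
$$
n^2 - p^s(n-1) \leq \frac{q-1}{d} + 1.
$$
The delicate point is to choose the parameters so that the only integer $n > |K|$ consistent with this inequality is ruled out. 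I expect the condition $q < d|K|(|K|+1)$ to force $n$ into a narrow window just above $|K|$, and the congruence condition on $r$ (the remainder of $\frac{q-1}{d}$ modulo $p|K|$) to control the value of $p^s$ precisely enough to derive a contradiction.

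The main obstacle will be pinning down $p^s$ as a function of $n$ in the critical range $|K| < n \leq \sqrt{q/d} + O(\sqrt{q/p})$. Since $s$ is defined by $p^s n \leq q < p^{s+1} n$, its value jumps as $n$ crosses powers-of-$p$ thresholds, and the whole argument hinges on showing that for the relevant $n$ one has $p^s$ small enough (likely $p^s = q/(p|K|)$ or a comparable quantity tied to the size of $K$) that the inequality $n^2 - p^s(n-1) \leq \frac{q-1}{d}+1$ becomes violated. The hypothesis $r < (p-1)|K|$ is presumably exactly what is needed to certify this threshold behavior: writing $\frac{q-1}{d} = p|K| \cdot \ell + r$ and comparing against the quadratic in $n$, the remainder condition should guarantee that no integer strictly between $|K|$ and the next feasible value survives. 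I would carry out this case analysis by substituting $n = |K| + 1$ as the extremal candidate, using $q < d|K|(|K|+1)$ to bound $\frac{q-1}{d} < |K|(|K|+1)$, and checking that the resulting inequality $(|K|+1)^2 - p^s|K| \leq |K|(|K|+1) + 1$ fails under the stated constraints — thereby contradicting $n > |K|$ and forcing $\omega(GP(q,d)) = |K|$.
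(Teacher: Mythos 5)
Your lower bound argument (that $K$ is a clique because $d(|K|-1) \mid (q-1)$ forces $K^* \subseteq (\F_q^*)^d$) matches the paper. But the upper bound is where the proposal breaks down, and the route you chose cannot be repaired. The direction-counting bound of Theorem~\ref{thm:mainD} applied to $U=C\times C$ with $|C|=n$ loses a term $p^s(n-1)$ where $p^s$ is roughly $q/n$ (up to a factor of $p$). In the regime of this proposition, $q<d|K|(|K|+1)$ forces the critical clique size $n$ to be near $|K|$, so $p^s(n-1)$ is of order $q/p$ or larger, which swamps $n^2\approx |K|^2$ unless $|K|>\sqrt{q/p}$ --- and even in the borderline case of Theorem~\ref{thm:cliquenumber} ($q=p^3$, $|K|=p$, $p^s=p^2$) the lower bound $n^2-p^s(n-1)+1$ is negative, hence vacuous. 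Your own extremal check confirms this: with $n=|K|+1$ the inequality $(|K|+1)^2-p^s|K|\leq |K|(|K|+1)+1$ reduces to $p^s\geq 1$, which always holds, so no contradiction is obtained. Your reading of the hypothesis $r<(p-1)|K|$ as a device for pinning down $p^s$ is also not what that hypothesis is for.

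The paper's actual argument uses Theorem~\ref{thm:Nn} (the Stepanov/Hanson--Petridis bound), not Theorem~\ref{thm:mainD}. Assuming $N=\omega(GP(q,d))\geq |K|+1$, one takes $n=|K|+1$ and must verify $\binom{n-1+\frac{q-1}{d}}{\frac{q-1}{d}}\not\equiv 0\pmod p$. This is where $r<(p-1)|K|$ enters: it guarantees there is no carrying when $|K|$ is added to $\frac{q-1}{d}$ in base $p$ (the low-order block of digits, namely the remainder $r$ modulo $p|K|$, absorbs the addition of $|K|=p^{\log_p|K|}$ without overflow), so Kummer's theorem gives the nonvanishing. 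Then Theorem~\ref{thm:Nn} yields $(N-1)n\geq |K|(|K|+1)\leq \frac{q-1}{d}$, contradicting $\frac{q-1}{d}<|K|(|K|+1)$, which follows from $q<d|K|(|K|+1)$. So the missing idea is the switch from the direction-set machinery of Section~\ref{sec:sec2} to the binomial-coefficient criterion of Theorem~\ref{thm:Nn}, with the remainder hypothesis serving as a carrying condition for Kummer's theorem.
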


\begin{proof}
First, it is easy to verify that $K$ forms a clique in $GP(q,d)$ and thus $\omega(GP(q,d)) \geq |K|$; this was first observed in \cite{BDR88}. It suffices to show that $N=\omega(GP(q,d)) \leq |K|$. For the sake of contradiction, assume otherwise that $N \geq |K|+1$. Take $n=|K|+1$ so that $2 \leq n \leq N$. Since $r<(p-1)|K|$, there is no carrying between the addition of $r$ and $|K|$ in base-$p$. Thus, there is no carrying between the addition of $\frac{q-1}{d}$ and $|K|$ in base-$p$. It follows from Kummer's theorem that
$$
\binom{n-1+\frac{q-1}{d}}{\frac{q-1}{d}}=\binom{|K|+\frac{q-1}{d}}{\frac{q-1}{d}} \not \equiv 0 \pmod p.
$$
Thus, Theorem~\ref{thm:Nn} implies that
$$|K|(|K|+1) \leq (N-1)n \leq \frac{q-1}{d}<|K|(|K|+1),$$
a contradiction. This completes the proof that $\omega(GP(q,d))=|K|$.
\end{proof}

Theorem~\ref{thm:cliquenumber} is a special case of Proposition~\ref{prop:cliquenumber2}.

\begin{proof}[Proof of Theorem~\ref{thm:cliquenumber}]
Let $K=\F_p$. It suffices to check the assumptions in Proposition~\ref{prop:cliquenumber2}. Note that $d$ is a divisor of $\frac{q-1}{|K|-1}=p^2+p+1$ and $d|K|(|K|+1)>p^3=q$. Since $d>p$ and $d \mid (p^2+p+1)$, we have $d \geq p+2$ and thus $r=\frac{q-1}{d} \leq \frac{q-1}{p+2}<p^2-p$. 
\end{proof}

In the next examples, we construct several infinite families of generalized Paley graphs for which Proposition~\ref{prop:cliquenumber2} is applicable to determine their clique number. 
\begin{ex}
Let $q=p^{3m}$, $K=\F_{p^m}$, and $d=\frac{p^{2m}+p^m+1}{3}$, where $p$ is an odd prime with $p \equiv 1 \pmod 3$. Then $3d=\frac{q-1}{|K|-1}$ and thus $q<d|K|(|K|+1)$. Also note that $\frac{q-1}{d}=3(|K|-1)<(p-1)|K|$. Thus, Proposition~\ref{prop:cliquenumber2} implies that $\omega(GP(q,d))=|K|$.
\end{ex}

\begin{ex}
Let $s,t$ be coprime positive integers with $s>t$. Let $p$ be an odd prime, $q=p^{st}$, and $K=\F_{p^s}$. Since $s,t$ are coprime, we can take $d=\frac{(q-1)(p-1)}{(p^s-1)(p^t-1)} \in \Z$. Note that both $K$ and $\F_{p^t}$ are cliques in $GP(q,d)$. We deduce that $\omega(GP(q,d))=|K|=p^s$ from Proposition~\ref{prop:cliquenumber2}. Indeed, since $s>t$, we have $d|K|(|K|+1)>(q-1)(p-1)>q$. On the other hand, we also have
$$
\frac{q-1}{d}=\frac{(p^s-1)(p^t-1)}{p-1}=(p^s-1)(p^{t-1}+p^{t-2}+\cdots+p+1) \equiv p^s-(p^{t-1}+p^{t-2}+\cdots+p+1) \pmod {p^{s+1}}
$$
so that the reminder $r=p^s-(p^{t-1}+p^{t-2}+\cdots+p+1)<p^s=|K|$.
\end{ex}

\begin{ex}
Fix the degree of the field extension $\F_q$ over $K$. Given Proposition~\ref{prop:cliquenumber2}, it would be interesting to find a graph with edge density as large as possible (in other words, we want $d$ to be as small as possible) for which the assumptions in Proposition~\ref{prop:cliquenumber2} are satisfied, as Proposition~\ref{prop:cliquenumber2} is ``strongest" in that case. To achieve that, one needs to understand the distribution of divisors of $\frac{q-1}{|K|-1}$, which is, in general, difficult since $\frac{q-1}{|K|-1}$ is a polynomial in $|K|$, and $|K|$ is a prime power. 

We illustrate this goal in the special case $q=p^3$ and $K=\F_p$ in view of Theorem~\ref{thm:cliquenumber}. We need to find a divisor $d$ of $p^2+p+1$ such that $d>p$ and $d$ is as small as possible, that is, we want $d=p^{1+o(1)}$. Assume that $p$ is of the form $2x^2+x+1$; the infinitude of such primes would be guaranteed by Schinzel's hypothesis H \cite{SS58} or Bunyakovsky's conjecture. Observe that we have the identity
$$
(4x^2+3)(x^2+x+1)=(2x^2+x+1)^2+(2x^2+x+1)+1,
$$
so we can take $d=4x^2+3$ to be a divisor of $p^2+p+1$ such that $d \approx 2p$. 
\end{ex}

Next, we discuss the necessity of the two assumptions in the statement of Proposition~\ref{prop:cliquenumber2}.

\begin{ex}
Let $p$ be an odd prime, $q=p^4$, and $K=\F_p$. Proposition~\ref{prop:cliquenumber2} implies that $\omega(GP(q, 2(p^2+1)))=p$ and $K$ forms a maximum clique. However, note that $\omega(GP(q, p^2+1))=p^2$ since the subfield $\F_{p^2}$ forms a clique in $GP(q,p^2+1)$ and the trivial upper bound on the clique number is $p^2$. Note that $q<(p^2+1)p(p+1)$ while $\frac{q-1}{p^2+1}=p^2-1>(p-1)p$, which shows that the assumption ``$r<(p-1)|K|$" cannot be completely dropped in  Proposition~\ref{prop:cliquenumber2}. Interestingly, we have $\omega(GP(q, p^2+1))=p^2$ and $\omega(GP(q, 2(p^2+1)))=p$, while $GP(q, p^2+1)$ is simply the edge-disjoint union of two copies of $GP(q, 2(p^2+1))$.
\end{ex}

\begin{ex}
Let $q=5^6$, $d=3$, and $K=\F_{5^2}$. Note that $d \mid \frac{q-1}{|K|-1}=651$, so $K$ is a clique in $GP(q,3)$. Observe that $\frac{q-1}{d}=(1,3,1,3,1,3)$ in base-$5$, so $r<(p-1)|K|$. However, note that $d|K|(|K|+1)<q$. This shows that the assumption ``$q<d|K|(|K|+1)$" cannot be completely dropped in  Proposition~\ref{prop:cliquenumber2} since we in fact have $\omega(GP(q,3))=125$ since $\F_{125}$ forms a clique in $GP(q,3)$.
\end{ex}

\section*{Acknowledgment}
The author thanks Greg Martin, J\'ozsef Solymosi, Zixiang Xu, and Ethan White for helpful discussions. The author also thanks the anonymous referees for their valuable comments and suggestions.

\bibliographystyle{abbrv}
\bibliography{main}

\end{document}